\let\@fnsymbol\@arabic
\definecolor{marin}{rgb}{0.,0.3,0.7}
\providecommand{\abs}[1]{\lvert#1\rvert}
\providecommand{\absbig}[1]{\bigl\lvert#1\bigr\rvert}
\providecommand{\kla}[1]{(#1)}
\providecommand{\klabig}[1]{\bigl(#1\bigr)}
\providecommand{\klaBig}[1]{\Bigl(#1\Bigr)}
\providecommand{\norm}[1]{\lVert#1\rVert}
\providecommand{\normbig}[1]{\bigl\lVert#1\bigr\rVert}
\DeclareMathOperator{\diag}{diag}
\DeclareMathOperator{\sinc}{sinc}
\DeclareMathOperator{\real}{Re}
\newcommand{\iu}{\mathrm{i}}
\newcommand{\e}{\mathrm{e}}
\newtheorem{theorem}{Theorem}[section]
\newtheorem{lemma}[theorem]{Lemma}
\theoremstyle{definition}
\newtheorem{remark}[theorem]{Remark}
\title{On energy conservation by trigonometric integrators in the linear case with application to wave equations}
\author{Ludwig Gauckler\,\thanks{Institut f\"ur Mathematik,
          Freie Universit\"at Berlin,
          Arnimallee 9,
          D-14195 Berlin, Germany.}
}
\date{Version of 11 May 2018}
\begin{document}

\maketitle

\begin{abstract}
Trigonometric integrators for oscillatory linear Hamiltonian differential equations are considered. 
Under a condition of Hairer \& Lubich on the filter functions in the method, a modified energy is derived that is exactly preserved by trigonometric integrators. This implies and extends a known result on all-time near-conservation of energy. The extension can be applied to linear wave equations.\\[1.5ex]
\textbf{Mathematics Subject Classification (2010):} 
65P10, 
65L05, 
37M15.\\[1.5ex] 
\textbf{Keywords:} Oscillatory Hamiltonian systems, trigonometric integrators, energy conservation, long-time behaviour, modified energy.
\end{abstract}

\section{Introduction}

Trigonometric integrators form a popular class of numerical methods for oscillatory second-order differential equations; see \cite[Chapter XIII]{Hairer2006}. The various available trigonometric integrators differ (only) by the filter functions that are used inside the methods. 
One way to choose filter functions was put forward by Hairer \& Lubich in~\cite{Hairer2000}, which led to the well-known trigonometric integrators of Hairer \& Lubich~\cite{Hairer2000} and Grimm \& Hochbruck~\cite{Grimm2006}. It is a condition on the filter functions (see~\eqref{eq-hl} below) that can be proven to imply very good energy conservation by the corresponding trigonometric integrators on long time intervals: In the case of \emph{linear} oscillatory Hamiltonian differential equations, all-time near-conservation of energy can be proven for \emph{all} step-sizes; see~\cite{Hairer2000}. In other words, there are no numerical resonances (in the linear case) that show up in other trigonometric integrators on long time intervals. 

In the present note, we consider trigonometric integrators under the mentioned condition on the filter functions of Hairer \& Lubich~\cite{Hairer2000} in the mentioned situation of linear oscillatory Hamiltonian differential equations. We show that there exists a \emph{modified energy} that is \emph{exactly} preserved by the numerical method. 
This modified energy is close to the original energy, which yields a new proof and an extension of the mentioned result of~\cite{Hairer2000} on all-time near-conservation of energy for all step-sizes.

This extension can be applied to linear wave equations. We use it to prove all-time near-conservation of energy for a spectral semi-discretization of linear wave equations, again without any restriction on the time step-size, neither of CFL-type nor of resonance-excluding nature. This seems to be the first long-time result for temporal discretizations of Hamiltonian partial differential equations that is completely uniform in the time step-size.

\section{Oscillatory Hamiltonian systems}

We consider oscillatory Hamiltonian systems of the form
\begin{equation}\label{eq-ode}
\ddot{q} = -\Omega^2 q + g(q), \qquad q=q(t)\in\mathbb{C}^d.
\end{equation}
In this equation, the matrix
\[
  \Omega=\diag(\omega_j)_{j=1}^d\in\mathbb{R}^{d\times d}
\]
is a diagonal matrix containing nonnegative, possibly large frequencies $\omega_j\in\mathbb{R}$. We denote by~$\omega$ the smallest nonzero frequency:
\[
\omega = \min_{j: \, \omega_j>0} \omega_j.
\]
The term
\[
g(q)=-\nabla U(q)
\]
in~\eqref{eq-ode} stems from a (sufficiently regular, in particular real differentiable) potential $U\colon\mathbb{C}^d\rightarrow \mathbb{R}$. The complex gradient $\nabla$ with respect to $q\in\mathbb{C}^d$ is defined as $\nabla=\nabla_x+\iu\nabla_y$ with the real part $x\in\mathbb{R}^d$ and the imaginary part $y\in\mathbb{R}^d$ of $q=x+\iu y$.\footnote{We remark that the choice of a complex setting is with a view towards the application to wave equations in the final part of the paper.}
We will be interested in the case that~\eqref{eq-ode} is a linear equation, i.e., 
\begin{equation}\label{eq-A}
  g(q) = - A q, \qquad 
U(q) = \tfrac12 q^* A q, \qquad A\in\mathbb{C}^{d\times d}\text{ self-adjoint,}
\end{equation}
where~$*$ denotes the conjugate transpose.

The total energy of the Hamiltonian system~\eqref{eq-ode} is given by
\begin{equation}\label{eq-energy}
H(q,\dot{q}) = \tfrac12 \norm{\Omega q}^2 + \tfrac12 \norm{\dot{q}}^2 + U(q),
\end{equation}
where $\norm{\cdot}$ denotes the Euclidean norm on~$\mathbb{C}^d$.

\section{Trigonometric integrators}

Trigonometric integrators form a popular class of numerical methods for oscillatory Hamiltonian systems~\eqref{eq-ode}. We consider here symmetric trigonometric integrators with the step-size~$h$ (see \cite[Chapter XIII]{Hairer2006}):
\begin{subequations}\label{eq-trigo}\begin{align}
q_{n+1} &= \cos(h\Omega) q_n + h\sinc(h\Omega) \dot{q}_n + \tfrac12 h^2 \sinc(h\Omega) \Psi_1 g(\Phi q_n),\label{eq-trigo-1}\\
\dot{q}_{n+1} &= -\Omega \sin(h\Omega) q_n + \cos(h\Omega) \dot{q}_n + \tfrac12 h \klabig{ \cos(h\Omega) \Psi_1 g(\Phi q_n) + \Psi_1 g(\Phi q_{n+1})}.\label{eq-trigo-2}
\end{align}\end{subequations}
Throughout the paper, we assume $h\le 1$.
In the method~\eqref{eq-trigo}, we use filter operators  
\[ 
\Psi_1=\psi_1(h\Omega) \qquad\text{and}\qquad \Phi=\phi(h\Omega)
\]
that are computed from real-valued and even filter functions~$\psi_1$ and~$\phi$. We will use the natural and usual conditions 
\begin{equation}\label{eq-filter-bounds}
\abs{\psi_1(\xi)} \le c_0, \qquad \abs{\phi(\xi)}\le c_0, \qquad \abs{\phi(\xi)-1}\le c_1 \abs{\xi} \qquad\text{for all }\, \xi\in\mathbb{R}
\end{equation}
on the filter functions. 
The condition of Hairer \& Lubich~\cite{Hairer2000} as mentioned in the introduction is
\begin{equation}\label{eq-hl}
\psi_1 = \sinc\cdot \,\phi.
\end{equation}

A symmetric trigonometric integrator~\eqref{eq-trigo} can be interpreted as a splitting integrator applied to an averaged version of~\eqref{eq-ode}. In fact, the integrator~\eqref{eq-trigo} can be written as
\begin{subequations}\label{eq-trigo-split}\begin{align}
\dot{q}_{n,+} &= \dot{q}_n + \tfrac12 h \Psi_1 g(\Phi q_n),\label{eq-trigo-split-1}\\
\begin{pmatrix} q_{n+1}\\ \dot{q}_{n+1,-}\end{pmatrix} &= 
\begin{pmatrix} \cos(h\Omega) & h\sinc(h\Omega)\\ -\Omega \sin(h\Omega) & \cos(h\Omega) \end{pmatrix} 
\begin{pmatrix} q_{n}\\ \dot{q}_{n,+}\end{pmatrix},\label{eq-trigo-split-2}\\
\dot{q}_{n+1} &= \dot{q}_{n+1,-} + \tfrac12 h \Psi_1 g(\Phi q_{n+1}),\label{eq-trigo-split-3}
\end{align}\end{subequations}
which is a Strang splitting: The second line~\eqref{eq-trigo-split-2} is the solution of $\mathrm{d} q/\mathrm{d} t = \dot{q}$, $\mathrm{d}\dot{q}/\mathrm{d} t = -\Omega^2 q$ over a time interval~$h$ for initial values $q_n$, $\dot{q}_{n,+}$. The first line~\eqref{eq-trigo-split-1} and the third line~\eqref{eq-trigo-split-3} are solutions of $\mathrm{d} q/\mathrm{d} t = 0$, $\mathrm{d} \dot{q}/\mathrm{d} t = \Psi_1 g(\Phi q)$ over a time interval $h/2$ for initial values $q_n$, $\dot{q}_n$ and $q_{n+1}$, $\dot{q}_{n+1,-}$, respectively.

\section{A modified energy}

In this section, we prove our main result which states that, under the condition~\eqref{eq-hl} and in the linear case, the trigonometric integrator~\eqref{eq-trigo} exactly conserves a modified energy. The derivation of this modified energy is based on~\cite{Gaucklera}.

\begin{theorem}\label{thm-ec}
Consider the oscillatory system~\eqref{eq-ode} in the linear case~\eqref{eq-A} discretized with a trigonometric integrator~\eqref{eq-trigo} whose filter functions satisfy~\eqref{eq-hl}. Then, we have
\[
\mathcal{H}(q_{n+1},\dot{q}_{n+1}) = \mathcal{H}(q_{n},\dot{q}_{n})
\]
with the modified energy
\begin{equation}\label{eq-Ec}
\mathcal{H}(q,\dot{q}) = \tfrac12 \norm{\Omega q}^2 + \tfrac12 \norm{\dot{q}}^2 + \tfrac12 \real\klabig{\kla{\cos(h\Omega) \Phi q}^* A \Phi q} - \tfrac18 h^2 \norm{\Psi_1 A \Phi q}^2.
\end{equation}
\end{theorem}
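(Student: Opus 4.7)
My approach is to work directly with the Strang splitting representation~\eqref{eq-trigo-split} and track $\mathcal{H}$ across each of the three sub-steps. The Hairer--Lubich condition~\eqref{eq-hl} is used first to put the kicks into a convenient form: setting $c := \cos(h\Omega)$, $s := h\sinc(h\Omega)$, and $\tilde{A} := \Phi A \Phi$ (which is Hermitian, since $\Phi$ is a real-valued function of the diagonal $\Omega$ and $A^* = A$), one has $h\Psi_1 A \Phi = h\sinc(h\Omega)\Phi A \Phi = s\tilde{A}$. The kick then reads $\dot q \mapsto \dot q - \tfrac12 s\tilde{A} q$, and, using that $\Phi$ commutes with $c$ and that $\Phi s^2 \Phi = s^2 \Phi^2$, the modified energy takes the compact form
\[
\mathcal{H}(q, \dot q) = \tfrac12 \|\Omega q\|^2 + \tfrac12 \|\dot q\|^2 + \tfrac12 \real(q^* c \tilde{A} q) - \tfrac18 \|s\tilde{A} q\|^2.
\]

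Next I would compute the change in $\mathcal{H}$ across each sub-step. For the first kick~\eqref{eq-trigo-split-1}, only $\tfrac12 \|\dot q\|^2$ contributes, giving $\Delta_1 \mathcal{H} = -\tfrac12 \real(\dot q_n^* s\tilde{A} q_n) + \tfrac18 \|s\tilde{A} q_n\|^2$. Across the rotation~\eqref{eq-trigo-split-2} the oscillator energy $\tfrac12 \|\Omega q\|^2 + \tfrac12 \|\dot q\|^2$ is preserved (the rotation is the exact flow of $\ddot q = -\Omega^2 q$), so only the $q$-dependent part of $\mathcal{H}$ changes. The second kick~\eqref{eq-trigo-split-3} mirrors the first, based at $(q_{n+1}, \dot q_{n+1}^-)$. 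Summing the three contributions, the $\tfrac18 \|s\tilde{A} q_{n+1}\|^2$ terms cancel at once, and the identity to be verified reduces to
\[
\tfrac12 \real\bigl(q_{n+1}^* c\tilde{A} q_{n+1} - q_n^* c\tilde{A} q_n\bigr) + \tfrac14 \|s\tilde{A} q_n\|^2 = \tfrac12 \real(\dot q_n^* s\tilde{A} q_n) + \tfrac12 \real\bigl((\dot q_{n+1}^-)^* s\tilde{A} q_{n+1}\bigr).
\]
I would then substitute $q_{n+1} = c q_n + s \dot q_n - \tfrac12 s^2 \tilde{A} q_n$ and $\dot q_{n+1}^- = -s\Omega^2 q_n + c \dot q_n - \tfrac12 c s \tilde{A} q_n$ (obtained by composing the first kick with the rotation), expand both sides, and group the resulting terms by their content in $q_n$ and $\dot q_n$.

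The main obstacle will be the algebraic bookkeeping for this expansion: because $c$, $s$, and all functions of $\Omega$ commute among themselves but none commutes with $\tilde{A}$, many noncommuting products proliferate, with monomials up to cubic in $\tilde{A}$. The decisive structural input is the Pythagorean identity $c^2 + \Omega^2 s^2 = I$ (i.e.\ $\cos^2(h\Omega)+\sin^2(h\Omega)=I$): at the pure $q_n q_n$ level it rewrites $c^2\tilde A c - \tilde A c = (c^2 - I)\tilde A c = -\Omega^2 s^2 \tilde A c$, which is precisely the term arising from the leading part of $(\dot q_{n+1}^-)^* s\tilde A q_{n+1}$ and so cancels against it (after symmetrising). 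The role of $\real$ is equally essential, since $c\tilde{A}$ is not Hermitian and both orderings $c\tilde A$ and $\tilde A c$ must appear with the correct weights for the cancellations to close. Once the bookkeeping is carried out and the identity $c^2+\Omega^2 s^2=I$ is used, the $\tfrac14 \|s\tilde{A} q_n\|^2$ correction and the mixed $\real(\dot q_n^* s\tilde{A} q_n)$ contributions are absorbed, and the full identity follows.
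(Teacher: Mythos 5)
Your proposal is correct, and its first half coincides with the paper's proof: both pass to the splitting form~\eqref{eq-trigo-split} and exploit that the rotation~\eqref{eq-trigo-split-2} preserves $\tfrac12\norm{\Omega q}^2+\tfrac12\norm{\dot q}^2$, while the kicks only shift the velocity. The endgame differs. The paper rewrites the two mixed terms $\pm h\dot q^*\Psi_1 g(\Phi q)$ by substituting the update formula~\eqref{eq-trigo-1} and its symmetric counterpart (using~\eqref{eq-hl} to trade $\Psi_1$ for $\sinc(h\Omega)\Phi$), which produces the identity~\eqref{eq-ec-nonlinear} valid for a \emph{general} nonlinearity $g=-\nabla U$; linearity and self-adjointness of $A$ enter only in the last line, to equate $\real\kla{(\Phi q_n)^*g(\Phi q_{n+1})}$ with $\real\kla{(\Phi q_{n+1})^*g(\Phi q_n)}$, and no expansion in powers of $A$ is ever needed. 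You instead insert linearity from the outset ($\tilde A=\Phi A\Phi$) and reduce conservation to a single matrix identity in $q_n,\dot q_n$, verified by direct expansion using $\cos^2(h\Omega)+\sin^2(h\Omega)=I$, the commutativity of functions of $h\Omega$, and the Hermitian symmetry of $\tilde A$ under $\real$. I checked that your reduced identity is exactly what the three sub-step increments sum to and that the expansion does close with precisely the cancellation mechanisms you name (the quadratic- and cubic-in-$\tilde A$ monomials cancel pairwise by commutativity, the linear ones via the Pythagorean identity after symmetrising with $\real$), so the only caveat is that your decisive computation is sketched rather than written out. What you lose relative to the paper is the intermediate nonlinear statement~\eqref{eq-ec-nonlinear} (which isolates exactly where linearity is used) and some economy; what you gain is a completely elementary, self-contained verification that never needs the symmetric counterpart of~\eqref{eq-trigo-1}.
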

\begin{proof}
We consider the trigonometric integrator~\eqref{eq-trigo} written as a splitting method~\eqref{eq-trigo-split}. In the second step~\eqref{eq-trigo-split-2}, we have
\[
\norm{\Omega q_{n+1}}^2 + \norm{\dot{q}_{n+1,-}}^2 = \norm{\Omega q_{n}}^2 + \norm{\dot{q}_{n,+}}^2,
\]
since the matrix $\begin{psmallmatrix}\cos(h\Omega) & \sin(h\Omega)\\ -\sin(h\Omega) & \cos(h\Omega)\end{psmallmatrix}$ is unitary.
Replacing $\dot{q}_{n+1,-}$ and $\dot{q}_{n,+}$ in this equation with~\eqref{eq-trigo-split-1} and~\eqref{eq-trigo-split-3} yields
\begin{equation}\label{eq-aux1}\begin{split}
\norm{\Omega q_{n+1}}^2 &+ \norm{\dot{q}_{n+1}}^2 - \real\klabig{h \dot{q}_{n+1}^* \Psi_1 g(\Phi q_{n+1})} + \tfrac14 h^2 \norm{\Psi_1 g(\Phi q_{n+1})}^2\\
 &= \norm{\Omega q_{n}}^2 + \norm{\dot{q}_{n}}^2 + \real\klabig{h \dot{q}_{n}^* \Psi_1 g(\Phi q_{n})} + \tfrac14 h^2 \norm{\Psi_1 g(\Phi q_{n})}^2.
\end{split}\end{equation}

We now rewrite the mixed terms $- h \dot{q}_{n+1}^* \Psi_1 g(\Phi q_{n+1})$ and $h \dot{q}_{n}^* \Psi_1 g(\Phi q_{n})$, which have opposite signs. For the term $h \dot{q}_{n}^* \Psi_1 g(\Phi q_{n})$, we have
\begin{equation}\label{eq-mixedterm-1}\begin{split}
    h \dot{q}_{n}^* \Psi_1 g(\Phi q_{n}) &= \klabig{h \sinc(h\Omega)\dot{q}_{n}}^* \Phi g(\Phi q_{n})\\
    &= \klabig{q_{n+1} - \cos(h\Omega) q_n - \tfrac12 h^2 \sinc(h\Omega) \Psi_1 g(\Phi q_n)}^* \Phi g(\Phi q_{n})\\
 &= \kla{\Phi q_{n+1}}^* g(\Phi q_{n}) - \kla{\cos(h\Omega) \Phi q_n}^* g(\Phi q_{n}) - \tfrac12 h^2 \norm{\Psi_1 g(\Phi q_n)}^2,
\end{split}\end{equation}
where the assumption~\eqref{eq-hl} on the filter functions is used in the first and third equality and~\eqref{eq-trigo-1} in the second equality. To rewrite the other mixed term $- h \dot{q}_{n+1}^* \Psi_1 g(\Phi q_{n+1})$ in~\eqref{eq-aux1}, we proceed as in~\eqref{eq-mixedterm-1}, but instead of~\eqref{eq-trigo-1} we use
\[
h \sinc(h\Omega) \dot{q}_{n+1} = - q_n + \cos(h\Omega) q_{n+1} + \tfrac12 h^2 \sinc(h\Omega) \Psi_1 g(\Phi q_{n+1}) ,
\]
which follows from subtracting~\eqref{eq-trigo-1} multiplied with $\cos(h\Omega)$ from~\eqref{eq-trigo-2} multiplied with $h\sinc(h\Omega)$ (or, more abstractly, from the symmetry of the method and~\eqref{eq-trigo-1}).
Together with~\eqref{eq-hl}, this yields
\begin{equation}\label{eq-mixedterm-2}\begin{split}
    - h \dot{q}_{n+1}^* \Psi_1 g(\Phi q_{n+1}) &= - \klabig{h \sinc(h\Omega)\dot{q}_{n+1}}^* \Phi g(\Phi q_{n+1})\\
    &= \klabig{q_n - \cos(h\Omega) q_{n+1} - \tfrac12 h^2 \sinc(h\Omega) \Psi_1 g(\Phi q_{n+1})}^* \Phi g(\Phi q_{n+1})\\
    &= \kla{\Phi q_{n}}^* g(\Phi q_{n+1})\\
    &\qquad\qquad- \kla{\cos(h\Omega) \Phi q_{n+1}}^* g(\Phi q_{n+1}) - \tfrac12 h^2 \norm{\Psi_1 g(\Phi q_{n+1})}^2.
\end{split}\end{equation}

Inserting the equations~\eqref{eq-mixedterm-1} and~\eqref{eq-mixedterm-2} into~\eqref{eq-aux1} and dividing by two yields
\begin{equation}\label{eq-ec-nonlinear}
\mathcal{H}(q_{n+1},\dot{q}_{n+1}) + \tfrac12 \real\klabig{\kla{\Phi q_{n}}^* g(\Phi q_{n+1})} = \mathcal{H}(q_{n},\dot{q}_{n}) + \tfrac12 \real\klabig{\kla{\Phi q_{n+1}}^* g(\Phi q_{n})}
\end{equation}
with the modified energy
\[
\mathcal{H}(q,\dot{q}) = \tfrac12 \norm{\Omega q}^2 + \tfrac12 \norm{\dot{q}}^2 - \tfrac12 \real\klabig{\kla{\cos(h\Omega) \Phi q}^* g(\Phi q)} - \tfrac18 h^2 \norm{\Psi_1 g(\Phi q)}^2.
\]

Up to now, all calculations hold for $g=-\nabla U$ stemming from general potentials $U$. Now, we use that $U$ is a quadratic potential~\eqref{eq-A}, i.e., $g(q)=-A q$ with a self-adjoint matrix $A$. This implies
\[
\overline{\kla{\Phi q_{n}}^* g(\Phi q_{n+1})} = \kla{\Phi q_{n+1}}^* g(\Phi q_{n}),
\]
and hence we get from~\eqref{eq-ec-nonlinear} exact conservation of $\mathcal{H}$ in this case.
\end{proof}

We note that the numerical solution~\eqref{eq-trigo} after one time step of length $h$ does not necessarily coincide with the solution at time $h$ of the Hamiltonian differential equation corresponding to the modified energy of Theorem~\ref{thm-ec} (the considered methods are not symplectic). In other words, the modified energy of Theorem~\ref{thm-ec} is not a modified energy in the sense of a backward error analysis.

\begin{remark}
  In the special case $\Omega=0\in\mathbb{R}^{d\times d}$, the trigonometric intergator~\eqref{eq-trigo} with filters $\phi(\xi)=1$ and $\psi_1(\xi)=\sinc(\xi)$ reduces to the well-known St\"ormer--Verlet method (or leapfrog method) applied to~\eqref{eq-ode} (with $\Omega=0$). The modified energy of Theorem~\ref{thm-ec} then simplifies to
  \[
\mathcal{H}(q_n,\dot{q}_n) = \tfrac12 \norm{\dot{q}_n}^2 + \tfrac12 q_n^* A q_n - \tfrac18 h^2 \norm{A q_n}^2.
  \]
  After replacing $\dot{q}_n$ with $\frac{q_{n+1}-q_n}{h} + \frac12 h A q_n$ (which follows from~\eqref{eq-trigo-1}), we thus get from Theorem~\ref{thm-ec} that
  \[
\tfrac12 \normbig{\tfrac1h (q_{n+1}-q_n)}^2 + \tfrac12 \real\klabig{q_{n+1}^* A q_n} 
  \]
  is preserved in time. This quantity is the known discrete energy of the St\"ormer--Verlet method in the linear case (see, e.g., \cite[Section~7.1.3]{Cohen2017}), which is usually obtained by scalar multiplying the two-step formulation $q_{n+1}-2q_n+q_{n-1} = -h^2 A q_n$ of the St\"ormer--Verlet method with $\tfrac1{2h^2}(q_{n+1}-q_{n-1})$ and taking the real part. Theorem~\ref{thm-ec} gives an alternative (and longer) proof that is based on the formulation of the method as a splitting intergator. 
\end{remark}

As we show next, the modified energy~\eqref{eq-Ec} of Theorem~\ref{thm-ec} is close to the original energy~\eqref{eq-energy}. 

\begin{lemma}\label{lemma-Hc}
Under the assumption~\eqref{eq-filter-bounds}, 
we have for the modified energy $\mathcal{H}$ of Theorem~\ref{thm-ec}
\begin{align*}
\absbig{\mathcal{H}(q,\dot{q}) - \tfrac12 \norm{\Omega q}^2 - \tfrac12 \norm{\dot{q}}^2} &\le \klabig{\breve{C}+\widehat{C} h^2} \norm{q}^2, \\
\absbig{\mathcal{H}(q,\dot{q}) - H(q,\dot{q})} &\le \widetilde{C} \min\klabig{h,\omega^{-1}} \norm{q} \, \norm{\Omega q} + \widehat{C} h^2 \norm{q}^2
\end{align*}
with $\breve{C} = \tfrac12 c_0^2 \norm{A}$, $\widehat{C} = \tfrac18 c_0^4 \norm{A}^2$ and $\widetilde{C} = \tfrac12 \klabig{2 c_0^2 + (c_0+1)\max(c_0+1,c_1)} \norm{A}$.
\end{lemma}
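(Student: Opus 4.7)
The plan is to treat the two inequalities separately. For the first, I would note that since $\Omega$ is diagonal, so are $\cos(h\Omega)$, $\Phi$ and $\Psi_1$, and their operator norms equal the sup-norms of the corresponding scalar functions; by~\eqref{eq-filter-bounds} we have $\norm{\cos(h\Omega)}\le 1$ and $\norm{\Phi},\norm{\Psi_1}\le c_0$. Applying Cauchy--Schwarz to each of the two remaining terms of $\mathcal H$ then directly yields $\tfrac12 c_0^2 \norm{A}\norm{q}^2 = \breve C\norm{q}^2$ and $\tfrac18 c_0^4 \norm{A}^2 h^2\norm{q}^2 = \widehat C h^2\norm{q}^2$.

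For the second inequality, I would start from the identity
\[
\mathcal H(q,\dot q) - H(q,\dot q) = \tfrac12 \real\klabig{\kla{\cos(h\Omega)\Phi q}^* A\Phi q - q^* A q} - \tfrac18 h^2 \norm{\Psi_1 A\Phi q}^2,
\]
obtained using $U(q) = \tfrac12 q^* A q = \tfrac12 \real(q^* A q)$, which is legitimate because $A$ is self-adjoint. The last term is already bounded by $\widehat C h^2\norm{q}^2$, so the work reduces to estimating the bilinear-form difference via the telescoping decomposition
\[
\cos(h\Omega)\Phi - I = \kla{\cos(h\Omega)-I}\Phi + \kla{\Phi - I},
\]
which splits the difference into three bilinear pieces, each to be handled by Cauchy--Schwarz.

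The heart of the argument is then the two componentwise estimates
\[
\normbig{\kla{\Phi-I} q} \le \max(c_0+1,c_1)\min(h,\omega^{-1})\norm{\Omega q},\qquad \normbig{\kla{\cos(h\Omega)-I} q}\le 2\min(h,\omega^{-1})\norm{\Omega q},
\]
which are valid because $\Phi-I$ and $\cos(h\Omega)-I$ are diagonal and vanish on eigenvectors with $\omega_j=0$. For a nonzero frequency $\omega_j$, the ratio $\abs{\phi(h\omega_j)-1}/\omega_j$ admits the two bounds $c_1 h$ (Lipschitz assumption from~\eqref{eq-filter-bounds}) and $(c_0+1)\omega_j^{-1}\le(c_0+1)\omega^{-1}$ (absolute bound from~\eqref{eq-filter-bounds} combined with $\omega_j\ge\omega$); taking the sharper of the two in the regimes $h\le\omega^{-1}$ and $h>\omega^{-1}$ produces the factor $\min(h,\omega^{-1})$ with constant $\max(c_0+1,c_1)$. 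The same dichotomy with $\abs{\cos\xi-1}\le\min(2,\abs{\xi})$ handles the cosine estimate.

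Feeding these bounds (together with $\norm{\Phi}\le c_0$ and the fact that $\Phi$ commutes with $\cos(h\Omega)-I$) into the three Cauchy--Schwarz estimates yields contributions with constants $2c_0^2$, $c_0\max(c_0+1,c_1)$ and $\max(c_0+1,c_1)$ multiplying $\norm{A}\min(h,\omega^{-1})\norm{q}\norm{\Omega q}$; their sum is $2\widetilde C/\norm{A}$, which matches $\widetilde C$ after the factor $\tfrac12$ in front of $\real$. The only delicate point---more bookkeeping than obstacle---is the case split on whether $h\omega\le 1$ or $>1$ and the verification that the three constants align exactly with the stated $\widetilde C$.
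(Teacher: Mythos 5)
Your proposal is correct and follows essentially the same route as the paper: the same direct Cauchy--Schwarz argument for the first bound, the same telescoping of $\cos(h\Omega)\Phi-I$ into three bilinear pieces, and the same componentwise treatment of $\cos(h\Omega)-I$ and $\Phi-I$ combining the $O(h)$ bound near zero with the $O(\omega^{-1})$ bound for nonzero frequencies to get the $\min(h,\omega^{-1})$ factor with exactly the stated constants. The only cosmetic difference is that you state the combined $\min$-estimates directly, whereas the paper records the two bounds separately and merges them at the end.
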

\begin{proof}
The first estimate follows directly from the definition of the modified energy $\mathcal{H}$ using the Cauchy--Schwarz inequality, $\norm{\cos(h\Omega)}\le 1$ and $\norm{\Phi}\le c_0$ and $\norm{\Psi_1}\le c_0$ by~\eqref{eq-filter-bounds}. For the second estimate, we note that
\begin{multline*}
\mathcal{H}(q,\dot{q}) - H(q,\dot{q}) = \tfrac12 \real\klabig{ \klabig{\kla{\cos(h\Omega)-1}\Phi q}^* A \Phi q} + \tfrac12 \real\klabig{\klabig{\kla{\Phi-1} q}^* A \Phi q}\\ + \tfrac12 \real\klabig{q^*A\klabig{\kla{\Phi-1} q}} - \tfrac18 h^2 \norm{\Psi_1 A \Phi q}^2.
\end{multline*}
The Cauchy--Schwarz inequality and $\norm{\Phi}\le c_0$ and $\norm{\Psi_1}\le c_0$ yield
\begin{equation}\label{eq-proof-HmHc}
\absbig{ \mathcal{H}(q,\dot{q}) - H(q,\dot{q}) }\le \tfrac12 \klaBig{ c_0^2 \norm{\kla{\cos(h\Omega)-1} q} + (c_0+1) \norm{\kla{\Phi-1} q} } \norm{A} \, \norm{q} + \widehat{C} h^2 \norm{q}^2.
\end{equation}
It remains to bound $\cos(h\Omega)-1$ and $\Phi-1$. On the one hand, we have the bounds
\[
\norm{\kla{\cos(h\Omega)-1} q}\le h \norm{\Omega q} \qquad\text{and}\qquad \norm{\kla{\Phi-1}q}\le c_1 h \norm{\Omega q},
\]
which follow from $\abs{\cos(h\omega_j)-1} = 2 \abs{\sin(\tfrac12 h\omega_j)^2}\le h\omega_j$ and $\abs{\phi(h\omega_j)-1} \le c_1 h \omega_j$ by~\eqref{eq-filter-bounds}. 
On the other hand, we have the bounds 
\[
\norm{\kla{\cos(h\Omega)-1} q}\le 2 \omega^{-1} \norm{\Omega q} \qquad\text{and}\qquad \norm{\kla{\Phi-1}q}\le (c_0+1) \omega^{-1} \norm{\Omega q},
\]
which follow from $\abs{\cos(h\omega_j)-1} =\abs{\phi(h\omega_j)-1}=0= \omega^{-1} \omega_j$ for $\omega_j=0$ by~\eqref{eq-filter-bounds} and $\abs{\cos(h\omega_j)-1}\le 2 \le 2\omega^{-1} \omega_j$ and $\abs{\phi(h\omega_j)-1}\le c_0+1 \le (c_0+1)\omega^{-1} \omega_j$ for $\omega_j\ne 0$ by~\eqref{eq-filter-bounds} (recall that $\omega$ is the minimal nonzero frequency). Inserting these bounds of $\cos(h\Omega)-1$ and $\Phi-1$ into~\eqref{eq-proof-HmHc} yields the statement of the lemma.
\end{proof}

\section{Dynamical consequences}

From Theorem~\ref{thm-ec} and Lemma~\ref{lemma-Hc} we can deduce regularity of the numerical solution in the sense that $\Omega q_n$ and $\dot{q}_n$ stay bounded provided that $q_n$ stays bounded.

\begin{lemma}[Regularity of the numerical solution]\label{lemma-reg}
In the situation of Theorem~\ref{thm-ec} and under the assumption~\eqref{eq-filter-bounds}, we have 
\[
\norm{\Omega q_n} + \norm{\dot{q}_n} \le C \qquad\text{for all }\, n\in\mathbb{N}
\]
with $C$ depending only on $c_0$, $\norm{A}$, $\norm{q_0}$, $\norm{\Omega q_0}$, $\norm{\dot{q}_0}$ and $\norm{q_n}$.
\end{lemma}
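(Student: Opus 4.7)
The plan is to combine the exact conservation of the modified energy $\mathcal{H}$ (Theorem~\ref{thm-ec}) with the first estimate of Lemma~\ref{lemma-Hc}, which is precisely the bilateral bound of the kinetic-plus-high-frequency part of $\mathcal{H}(q,\dot q)$ in terms of $\norm{q}^2$.

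Concretely, I would proceed as follows. First, by Theorem~\ref{thm-ec} we have $\mathcal{H}(q_n,\dot q_n) = \mathcal{H}(q_0,\dot q_0)$ for every $n\in\mathbb{N}$. Next, applying the first estimate of Lemma~\ref{lemma-Hc} at the $n$-th step gives
\[
\tfrac12 \norm{\Omega q_n}^2 + \tfrac12 \norm{\dot q_n}^2
\le \mathcal{H}(q_n,\dot q_n) + \klabig{\breve{C}+\widehat{C}h^2}\norm{q_n}^2,
\]
and the same estimate at $n=0$ gives
\[
\mathcal{H}(q_0,\dot q_0) \le \tfrac12 \norm{\Omega q_0}^2 + \tfrac12 \norm{\dot q_0}^2 + \klabig{\breve{C}+\widehat{C}h^2}\norm{q_0}^2.
\]
Chaining these three relations and invoking the standing assumption $h\le 1$ to absorb $\widehat{C} h^2$ into $\widehat{C}$ yields
\[
\tfrac12 \norm{\Omega q_n}^2 + \tfrac12 \norm{\dot q_n}^2 \le
\tfrac12 \norm{\Omega q_0}^2 + \tfrac12 \norm{\dot q_0}^2 + \klabig{\breve{C}+\widehat{C}}\klabig{\norm{q_0}^2 + \norm{q_n}^2},
\]
from which the desired bound on $\norm{\Omega q_n}+\norm{\dot q_n}$ follows. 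Since $\breve{C}$ and $\widehat{C}$ depend only on $c_0$ and $\norm{A}$, the overall constant $C$ depends only on the quantities listed in the statement.

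There is no real obstacle here: the lemma is essentially a direct readout of Theorem~\ref{thm-ec} together with the first inequality of Lemma~\ref{lemma-Hc}. The only point to watch is that the term $\norm{q_n}^2$ coming from the lower bound of $\mathcal{H}(q_n,\dot q_n)$ cannot be absorbed by $\mathcal{H}$ itself — which is why the constant~$C$ is allowed to depend on $\norm{q_n}$. The restriction $h\le 1$ ensures that the $h^2$ factor is harmless and keeps the final constant uniform in the step-size.
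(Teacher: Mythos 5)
Your proposal is correct and follows essentially the same argument as the paper: chain the exact conservation $\mathcal{H}(q_n,\dot q_n)=\mathcal{H}(q_0,\dot q_0)$ from Theorem~\ref{thm-ec} between the two applications of the first estimate of Lemma~\ref{lemma-Hc}, leaving the $\norm{q_n}^2$ term in the constant. The only cosmetic difference is that the paper inserts $\abs{\mathcal{H}}$ where you use the signed value, and keeps the factor $\widehat{C}h^2$ instead of absorbing it via $h\le 1$; both are immaterial.
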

\begin{proof}
By combining Theorem~\ref{thm-ec} and the first estimate of Lemma~\ref{lemma-Hc}, we get
\begin{align*}
  \tfrac12 \norm{\Omega q_n}^2 + \tfrac12 \norm{\dot{q}_n}^2 &\le \abs{\mathcal{H}(q_n,\dot{q}_n)} + \klabig{\breve{C}+\widehat{C}h^2} \norm{q_n}^2 \\
  &= \abs{\mathcal{H}(q_0,\dot{q}_0)} + \klabig{\breve{C}+\widehat{C}h^2} \norm{q_n}^2 \\
 &\le \tfrac12 \norm{\Omega q_0}^2 + \tfrac12 \norm{\dot{q}_0}^2 + \klabig{\breve{C}+\widehat{C}h^2}\norm{q_0}^2 + \klabig{\breve{C}+\widehat{C}h^2}\norm{q_n}^2
\end{align*}
with $\breve{C}$ and $\widehat{C}$ from Lemma~\ref{lemma-Hc}.
\end{proof}

We finally prove the following result on near-conservation of energy.

\begin{theorem}[Numerical energy conservation]\label{thm-nec}
In the situation of Theorem~\ref{thm-ec} and under the assumption~\eqref{eq-filter-bounds}, we have 
\[
\absbig{H(q_n,\dot{q}_n) - H(q_0,\dot{q}_0)} \le C \min\klabig{h,\omega^{-1}} + C h^2 \qquad\text{for all }\, n\in\mathbb{N}
\]
with $C$ depending only on $c_0$, $c_1$, $\norm{A}$, $\norm{q_0}$, $\norm{\Omega q_0}$, $\norm{\dot{q}_0}$ and $\norm{q_n}$.
\end{theorem}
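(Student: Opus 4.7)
The proof should be a direct combination of the three preceding results (Theorem~\ref{thm-ec}, Lemma~\ref{lemma-Hc}, and Lemma~\ref{lemma-reg}), with no real new content needed. The plan is to insert the modified energy $\mathcal{H}$ between $H(q_n,\dot{q}_n)$ and $H(q_0,\dot{q}_0)$ via the triangle inequality and exploit that $\mathcal{H}$ is a conserved quantity along the numerical flow.

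Concretely, I would first write
\[
\absbig{H(q_n,\dot{q}_n) - H(q_0,\dot{q}_0)} \le \absbig{H(q_n,\dot{q}_n) - \mathcal{H}(q_n,\dot{q}_n)} + \absbig{\mathcal{H}(q_n,\dot{q}_n) - \mathcal{H}(q_0,\dot{q}_0)} + \absbig{\mathcal{H}(q_0,\dot{q}_0) - H(q_0,\dot{q}_0)}.
\]
The middle term vanishes identically by Theorem~\ref{thm-ec}. For the two remaining boundary terms, I would apply the second estimate of Lemma~\ref{lemma-Hc}, which gives bounds of the form $\widetilde{C} \min(h,\omega^{-1}) \norm{q}\,\norm{\Omega q} + \widehat{C} h^2 \norm{q}^2$ evaluated at $(q_0,\dot{q}_0)$ and $(q_n,\dot{q}_n)$ respectively.

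For the term at $n=0$, the right-hand side is directly controlled by $\norm{q_0}$ and $\norm{\Omega q_0}$, which appear in the allowed list of constants. For the term at step $n$, we need to control $\norm{\Omega q_n}$ uniformly in $n$; this is exactly what Lemma~\ref{lemma-reg} provides, since it bounds $\norm{\Omega q_n}$ in terms of $c_0$, $\norm{A}$, the initial data $\norm{q_0}, \norm{\Omega q_0}, \norm{\dot{q}_0}$, and $\norm{q_n}$ — precisely the quantities on which $C$ in the statement is permitted to depend. Combining these ingredients and absorbing the various prefactors (which depend only on $c_0$, $c_1$, and $\norm{A}$) into a single constant $C$ yields the claimed bound.

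There is no real obstacle here; the only thing to be careful about is the bookkeeping of which constants are allowed in $C$. In particular, one must not try to bound $\norm{\Omega q_n}$ by invoking the estimate being proved, but rather by invoking Lemma~\ref{lemma-reg}, which relies on the first (coarser) bound in Lemma~\ref{lemma-Hc} rather than the second. This makes the argument clean and circular-free: the regularity lemma feeds $\norm{\Omega q_n}$ into the sharper second estimate of Lemma~\ref{lemma-Hc}, and the exact conservation of $\mathcal{H}$ does the rest.
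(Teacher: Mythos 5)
Your proposal is correct and follows essentially the same route as the paper: insert the conserved modified energy $\mathcal{H}$ (so the middle term vanishes by Theorem~\ref{thm-ec}), bound the two remaining differences by the second estimate of Lemma~\ref{lemma-Hc}, and control $\norm{\Omega q_n}$ via Lemma~\ref{lemma-reg}. Your remark on why the argument is not circular (Lemma~\ref{lemma-reg} uses only the first, coarser estimate of Lemma~\ref{lemma-Hc}) matches the structure of the paper's argument.
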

\begin{proof}
By combining Theorem~\ref{thm-ec} and the second estimate of Lemma~\ref{lemma-Hc}, we get
\begin{align*}
\absbig{H(q_n,\dot{q}_n) - H(q_0,\dot{q}_0)} &= \absbig{H(q_n,\dot{q}_n) - \mathcal{H}(q_n,\dot{q}_n) + \mathcal{H}(q_0,\dot{q}_0)- H(q_0,\dot{q}_0)}\\
 &\le \widetilde{C} \min\klabig{h,\omega^{-1}} \norm{q_n}\,\norm{\Omega q_n}+ \widehat{C}h^2\norm{q_n}^2\\ &\qquad + \widetilde{C} \min\klabig{h,\omega^{-1}} \norm{q_0}\,\norm{\Omega q_0} + \widehat{C}h^2\norm{q_0}^2
\end{align*}
with $\widetilde{C}$ and $\widehat{C}$ from Lemma~\ref{lemma-Hc}.
The dependence on $\norm{\Omega q_n}$ is no problem since Lemma~\ref{lemma-reg} provides a bound for this term in dependence of $c_0$, $\norm{A}$, $\norm{q_0}$, $\norm{\Omega q_0}$, $\norm{\dot{q}_0}$ and $\norm{q_n}$.
\end{proof}

Theorem~\ref{thm-nec} gives a conceptually different proof of the energy conservation properties of \cite[Theorems 3.1 and 3.2]{Hairer2000}. It (slightly) extends these results in that an arbitrary but finite number of different high frequencies is considered instead of just a single high frequency. On the other hand, it should be mentioned that the proof of~\cite{Hairer2000} extends to the oscillatory component of the total energy and paved to way to the technique of modulated Fourier expansions, which is used since then to gain insight into the nonlinear situation.

\section{Refined estimates}

The results of the previous section do not require bounds on $\Omega q_n$ (the numerical solution multiplied with the high frequencies), but only on $q_n$. In this section, we show how to get also rid of this weaker regularity assumption provided that there are no zero frequencies, i.e., if
\begin{equation}\label{eq-omnonzero}
\omega_j \ne 0 \qquad\text{for all }\, j=1,\dots,d.
\end{equation}

This assumption of nonzero frequencies allows us to control $\norm{q}$ by $\norm{\Omega q}$, which leads to the following bound of the numerical solution~$q_n$.

\begin{lemma}\label{lemma-reg-weak}
  Let $\omega\ge \tfrac12 c_0^2\norm{A}+1$.
In the situation of Theorem~\ref{thm-ec} and under the assumptions~\eqref{eq-filter-bounds} and~\eqref{eq-omnonzero}, 
we have 
\[
\norm{q_n} \le C
\]
with $C$ depending only on $c_0$, $\norm{A}$, $\norm{q_0}$, $\norm{\Omega q_0}$ and $\norm{\dot q_0}$. 
\end{lemma}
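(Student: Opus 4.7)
The plan is to combine Theorem~\ref{thm-ec} with the first estimate of Lemma~\ref{lemma-Hc}, as in the proof of Lemma~\ref{lemma-reg}, but then exploit assumption~\eqref{eq-omnonzero} to absorb the unwanted $\norm{q_n}^2$ term on the right-hand side into the $\norm{\Omega q_n}^2$ term. Concretely, I would start by noting that $\mathcal{H}(q_n,\dot{q}_n) = \mathcal{H}(q_0,\dot{q}_0)$ by Theorem~\ref{thm-ec}, and then apply the first bound of Lemma~\ref{lemma-Hc} to both sides to get
\[
\tfrac12\norm{\Omega q_n}^2 + \tfrac12\norm{\dot{q}_n}^2 \le \absbig{\mathcal{H}(q_0,\dot{q}_0)} + \klabig{\breve{C}+\widehat{C}h^2}\norm{q_n}^2,
\]
with the right-hand side $\absbig{\mathcal{H}(q_0,\dot{q}_0)}$ controlled by $\norm{\Omega q_0}$, $\norm{\dot{q}_0}$ and $\norm{q_0}$ (again via Lemma~\ref{lemma-Hc} and $h\le 1$).

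Next I would invoke~\eqref{eq-omnonzero}, which together with the definition of $\omega$ as the smallest nonzero frequency gives $\norm{q}\le \omega^{-1}\norm{\Omega q}$ for every $q\in\mathbb{C}^d$ (since then $\omega=\min_j\omega_j>0$). Substituting this for $q_n$ yields
\[
\klaBig{\tfrac12 - \klabig{\breve{C}+\widehat{C}h^2}\omega^{-2}}\norm{\Omega q_n}^2 + \tfrac12\norm{\dot{q}_n}^2 \le \absbig{\mathcal{H}(q_0,\dot{q}_0)}.
\]
The hypothesis $\omega\ge \tfrac12 c_0^2\norm{A}+1$ together with $h\le 1$ and the explicit values $\breve{C}=\tfrac12 c_0^2\norm{A}$, $\widehat{C}=\tfrac18 c_0^4\norm{A}^2$ yields $\omega^2 \ge 2(\breve{C}+\widehat{C})+1 \ge 2(\breve{C}+\widehat{C}h^2)+1$, so the bracketed coefficient is at least $\tfrac{1}{2\omega^2}$, and hence $\norm{\Omega q_n}^2 \le 2\omega^2\absbig{\mathcal{H}(q_0,\dot{q}_0)}$. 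Applying $\norm{q_n}\le \omega^{-1}\norm{\Omega q_n}$ once more cancels the $\omega^2$-factor and gives $\norm{q_n}^2 \le 2\absbig{\mathcal{H}(q_0,\dot{q}_0)}$, which is the desired $\omega$-independent bound.

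There is really no hard step here; the only thing to check carefully is that the threshold $\omega\ge \tfrac12 c_0^2\norm{A}+1$ is indeed sufficient to make the coefficient of $\norm{\Omega q_n}^2$ strictly positive (and bounded below uniformly in $h\le 1$) and that the subsequent division by $\omega^2$ is absorbed when returning from $\norm{\Omega q_n}$ to $\norm{q_n}$ via~\eqref{eq-omnonzero}. This cancellation is the whole point of the argument and the reason the resulting constant $C$ depends only on $c_0$, $\norm{A}$, $\norm{q_0}$, $\norm{\Omega q_0}$ and $\norm{\dot{q}_0}$, with no residual $\omega$-dependence.
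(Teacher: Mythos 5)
Your proposal is correct and follows essentially the same route as the paper: combine the exact conservation of $\mathcal{H}$ from Theorem~\ref{thm-ec} with the first estimate of Lemma~\ref{lemma-Hc}, then use $\norm{\Omega q_n}\ge\omega\norm{q_n}$ and the hypothesis $\omega\ge\tfrac12 c_0^2\norm{A}+1$ (equivalently $\omega^2\ge 2\breve{C}+2\widehat{C}+1$) to absorb the $(\breve{C}+\widehat{C}h^2)\norm{q_n}^2$ term. The only difference is bookkeeping: the paper absorbs the term directly to obtain $\tfrac12\norm{q_n}^2+\tfrac12\norm{\dot q_n}^2\le\abs{\mathcal{H}(q_0,\dot q_0)}$, while you first bound $\norm{\Omega q_n}^2\le 2\omega^2\abs{\mathcal{H}(q_0,\dot q_0)}$ and then divide by $\omega^2$, arriving at the same $\omega$-independent bound.
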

\begin{proof}
  From Lemma~\ref{lemma-Hc} and $h\le 1$, we get
  \[
\tfrac12 \norm{\Omega q}^2 + \tfrac12 \norm{\dot q}^2 \le \abs{\mathcal{H}(q,\dot{q})} + \klabig{\breve{C}+\widehat{C}} \norm{q}^2 
\]
with the constants $\breve{C}$ and $\widehat{C}$ of Lemma~\ref{lemma-Hc}.
We then use that $\norm{\Omega q}^2 \ge \omega^2 \norm{q}^2$ by assumption~\eqref{eq-omnonzero} and that $\kla{\breve{C}+\widehat{C}} \norm{q}^2 \le \tfrac12 (\omega^2-1)\norm{q}^2$ by the stated condition on $\omega$ (which implies $\omega^2-1\ge c_0^2\norm{A}+\tfrac14 c_0^4\norm{A}^2 = 2\breve{C}+2\widehat{C}$). This yields
\[
\tfrac12 \norm{q}^2 + \tfrac12 \norm{\dot q}^2 \le \abs{\mathcal{H}(q,\dot{q})}.
\]
For $q=q_n$, we thus have
  \[
\tfrac12 \norm{q_n}^2 + \tfrac12 \norm{\dot q_n}^2 \le \abs{\mathcal{H}(q_n,\dot{q_n})} = \abs{\mathcal{H}(q_0,\dot{q_0})}
\]
by Theorem~\ref{thm-ec}. The statement of the lemma then follows by using the first estimate of Lemma~\ref{lemma-Hc} to estimate $\mathcal{H}(q_0,\dot{q_0})$ in terms of $c_0$, $\norm{A}$, $\norm{q_0}$, $\norm{\Omega q_0}$ and $\norm{\dot q_0}$. 
\end{proof}

This result yields the following unconditional version of Theorem~\ref{thm-nec} on numerical energy conservation. (By unconditional, we mean that the constants do not depend in any way on the numerical solution.) 

\begin{theorem}[Unconditional numerical energy conservation]\label{thm-nec-strong}
  Let $\omega\ge \tfrac12 c_0^2\norm{A}+1$.
In the situation of Theorem~\ref{thm-ec} and under the assumptions~\eqref{eq-filter-bounds} and~\eqref{eq-omnonzero}, we have 
\[
\absbig{H(q_n,\dot{q}_n) - H(q_0,\dot{q}_0)} \le C \min\klabig{h,\omega^{-1}} + C h^2 \qquad\text{for all }\, n\in\mathbb{N}
\]
with $C$ depending only on $c_0$, $c_1$, $\norm{A}$, $\norm{\Omega q_0}$ and $\norm{\dot{q}_0}$.
\end{theorem}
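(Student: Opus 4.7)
My plan is to reduce Theorem~\ref{thm-nec-strong} to the combination of Theorem~\ref{thm-nec} and Lemma~\ref{lemma-reg-weak} by showing that, under the extra hypotheses $\omega \ge \tfrac12 c_0^2\norm{A}+1$ and~\eqref{eq-omnonzero}, the quantities $\norm{q_n}$ and $\norm{q_0}$ appearing in the constant of Theorem~\ref{thm-nec} can themselves be controlled purely in terms of $c_0$, $\norm{A}$, $\norm{\Omega q_0}$ and $\norm{\dot{q}_0}$. This turns the conditional bound of Theorem~\ref{thm-nec} into an unconditional one in exactly the sense required.

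First, I would invoke Theorem~\ref{thm-nec}, which already provides the desired estimate
\[
\absbig{H(q_n,\dot{q}_n) - H(q_0,\dot{q}_0)} \le C \min\klabig{h,\omega^{-1}} + C h^2,
\]
but with $C$ still depending on $\norm{q_0}$ and $\norm{q_n}$. Next, I would eliminate $\norm{q_n}$ by applying Lemma~\ref{lemma-reg-weak}, whose hypotheses match those of the theorem; this bounds $\norm{q_n}$ in terms of $c_0$, $\norm{A}$, $\norm{q_0}$, $\norm{\Omega q_0}$ and $\norm{\dot q_0}$.

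Finally, I would eliminate the remaining $\norm{q_0}$: from assumption~\eqref{eq-omnonzero} every frequency satisfies $\omega_j\ge\omega$, and since $\Omega$ is diagonal with entries $\omega_j$, this gives $\norm{\Omega q_0}\ge \omega\norm{q_0}$, hence $\norm{q_0}\le \omega^{-1}\norm{\Omega q_0}$. Because the hypothesis $\omega\ge \tfrac12 c_0^2\norm{A}+1\ge 1$ in particular ensures $\omega^{-1}\le 1$, we obtain $\norm{q_0}\le \norm{\Omega q_0}$, so $\norm{q_0}$ is absorbed into the allowed dependence on $\norm{\Omega q_0}$. Substituting these bounds back into the constant $C$ of Theorem~\ref{thm-nec} yields the claimed dependence solely on $c_0$, $c_1$, $\norm{A}$, $\norm{\Omega q_0}$ and $\norm{\dot{q}_0}$.

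Since each step is a direct quotation of a previously established result or a one-line inequality, no step is really an obstacle; the only mild subtlety is making sure the hypothesis $\omega \ge \tfrac12 c_0^2\norm{A}+1$ is genuinely strong enough for both invocations (it is exactly the hypothesis of Lemma~\ref{lemma-reg-weak}, and it secures $\omega\ge 1$ for the final reduction). Everything else is bookkeeping of constants.
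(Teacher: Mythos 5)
Your proposal is correct and follows essentially the same route as the paper: invoke Theorem~\ref{thm-nec}, remove the dependence on $\norm{q_n}$ via Lemma~\ref{lemma-reg-weak}, and remove the dependence on $\norm{q_0}$ via $\norm{q_0}\le\omega^{-1}\norm{\Omega q_0}$ from~\eqref{eq-omnonzero}.
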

\begin{proof}
Theorem~\ref{thm-nec} yields the same statement, but with a constant that depends in addition on $\norm{q_n}$ and $\norm{q_0}$. To get rid of these additional dependencies we use Lemma~\ref{lemma-reg-weak} and $\norm{q_0} \le \omega^{-1} \norm{\Omega q_0}$ by~\eqref{eq-omnonzero}.
\end{proof}

\section{Application to linear wave equations}

In this final section, we apply Theorem~\ref{thm-nec-strong} on numerical energy conservation to linear wave equations. We consider the Klein--Gordon equation with a real-valued potential $V=V(x)$ in one space dimension:
\begin{equation}\label{eq-wave}
\partial_t^2 u = \partial_x^2 u - \rho u - V u, \qquad u=u(x,t)
\end{equation}
with a nonnegative real parameter~$\rho$. 
In space, we impose $2\pi$-periodic boundary conditions. Therefore, we also assume that the potential~$V(x)=\sum_{j=-\infty}^{\infty} V_j \e^{\iu j x}$ is $2\pi$-periodic and that it belongs to the Sobolev space $H^1(\mathbb{T})$. 

For a Fourier collocation in space, we replace $u=u(x,t)$ by a (spatial) trigonometric polynomial of degree~$K$:
\[
u^K=u^K(x,t) = \sum_{j=-K}^{K-1} q_j(t) \e^{\iu j x} .
\]
Requiring that this ansatz satisfies the wave equation~\eqref{eq-wave} in the collocation points $x_k=k\pi/K$, $k=-K,\dots,K-1$, yields
\[
\partial_t^2 u^K = \partial_x^2 u^K - \rho u^K - \mathcal{I}^K \klabig{V u^K} 
\]
with initial values
\[
u^K(\cdot,0) = \mathcal{I}^K\klabig{u(\cdot,0)}, \qquad \partial_t u^K(\cdot,0) = \mathcal{I}^K\klabig{\partial_t u(\cdot,0)}
\]
and with the trigonometric interpolation~$\mathcal{I}^K$ by a trigonometric polynomial of degree~$K$. Written in Fourier space, this becomes the ordinary differential equation
\begin{equation}\label{eq-wave-semi}
\ddot{q} = - \Omega^2 q - A q, \qquad q=q(t) = \klabig{q_j(t)}_{j=-K}^{K-1}
\end{equation}
with the matrices
\[
\Omega = \diag(\omega_j)_{j=-K}^{K-1}, \qquad \omega_j = \sqrt{j^2+\rho}
\]
and
\[
A = (a_{j l})_{j,l=-K}^{K-1}, \qquad a_{j l} = \sum_{m=-\infty}^{\infty} V_{j-l+2Km} .
\]
(Note that $\mathcal{I}^K(V u^K) = \sum_{j=-K}^{K-1} (A q)_j \e^{\iu j x}$.)
The matrix~$A$ is self-adjoint since~$V$ is real-valued, and hence $V_j=\overline{V_{-j}}$ for all~$j$. After reindexing (indices from $1$ to $2K+1$ instead of $-K$ to $K-1$), the equation~\eqref{eq-wave-semi} is thus of the form~\eqref{eq-ode} with~\eqref{eq-A}. The energy~$H$ is again given by~\eqref{eq-energy}. 

In order to apply Theorem~\ref{thm-nec-strong} on numerical energy conservation for~\eqref{eq-wave-semi} we need to control $\norm{A}$.

\begin{lemma}\label{lemma-A-wave}
  We have
  \[
\norm{A} \le c_2 \norm{V}_{H^1(\mathbb{T})}
\]
with an absolute constant $c_2$ independent of $K$ and $V$.
\end{lemma}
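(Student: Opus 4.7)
The plan is to exploit the identity, noted just above Lemma~\ref{lemma-A-wave}, that for $q = (q_j)_{j=-K}^{K-1}$ with associated trigonometric polynomial $u^K(x) = \sum_{j=-K}^{K-1} q_j \e^{\iu j x}$, the vector $Aq$ is exactly the Fourier coefficient vector of $\mathcal{I}^K(V u^K)$. By Parseval this yields $\norm{q}^2 = (2\pi)^{-1}\norm{u^K}_{L^2(\mathbb{T})}^2$ and $\norm{Aq}^2 = (2\pi)^{-1}\norm{\mathcal{I}^K(V u^K)}_{L^2(\mathbb{T})}^2$, so the desired operator-norm estimate reduces to the pointwise-type bound $\norm{\mathcal{I}^K(V u^K)}_{L^2(\mathbb{T})} \le \norm{V}_{L^\infty(\mathbb{T})}\norm{u^K}_{L^2(\mathbb{T})}$.

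For this pointwise-type bound I would use exactness of the trapezoidal rule for trigonometric polynomials of degree up to $2K-1$: for any trigonometric polynomial $P$ of degree at most $K$, the polynomial $\abs{P}^2$ has degree at most $2K-1$, hence
\[
\norm{P}_{L^2(\mathbb{T})}^2 = \frac{\pi}{K} \sum_{k=-K}^{K-1} \abs{P(x_k)}^2
\]
on the collocation grid $x_k = k\pi/K$. I would first apply this with $P = \mathcal{I}^K(V u^K)$, where the interpolation property gives $P(x_k) = V(x_k)\,u^K(x_k)$, bound the sum by $\norm{V}_{L^\infty(\mathbb{T})}^2 \cdot \tfrac{\pi}{K} \sum_k \abs{u^K(x_k)}^2$, and then apply the same quadrature identity in the reverse direction to $u^K$ (of degree $K$) to rewrite this discrete sum as $\norm{V}_{L^\infty(\mathbb{T})}^2\,\norm{u^K}_{L^2(\mathbb{T})}^2$. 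Combined with the Parseval reductions above, this yields $\norm{A} \le \norm{V}_{L^\infty(\mathbb{T})}$.

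The final step is to invoke the one-dimensional Sobolev embedding $H^1(\mathbb{T}) \hookrightarrow L^\infty(\mathbb{T})$, which supplies an absolute constant $c_2$ with $\norm{V}_{L^\infty(\mathbb{T})} \le c_2 \norm{V}_{H^1(\mathbb{T})}$. I do not anticipate any serious obstacle: the key structural input is that $\mathcal{I}^K$ preserves nodal values, which converts the abstract aliasing structure in the definition of $a_{jl}$ into pointwise multiplication by $V$ on the grid. The only item requiring care is the bookkeeping of the Parseval and quadrature normalization constants, which must cancel so that the resulting bound on $\norm{A}$ is uniform in $K$.
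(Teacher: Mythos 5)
Your proof is correct, but it takes a genuinely different route from the paper. The paper reduces $\norm{Aq}$ to $\normbig{\mathcal{I}^K\klabig{\mathcal{I}^K(V)u^K}}_{L^2(\mathbb{T})}$ via Parseval and then cites two external results: a bilinear estimate $\norm{\mathcal{I}^K(v^Ku^K)}_{L^2}\le C\norm{v^K}_{H^1}\norm{u^K}_{L^2}$ for trigonometric polynomials and the stability bound $\norm{\mathcal{I}^K(V)}_{H^1}\le C\norm{V}_{H^1}$. You instead make the argument self-contained: exactness of the $2K$-point trapezoidal rule for trigonometric polynomials with frequencies in $\{-(2K-1),\dots,2K-1\}$ identifies the continuous $L^2$ norm with the discrete nodal $\ell^2$ norm, the interpolation property turns $\mathcal{I}^K(Vu^K)$ into pointwise multiplication by $V$ on the grid, and the Sobolev embedding $H^1(\mathbb{T})\hookrightarrow L^\infty(\mathbb{T})$ finishes. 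This buys a slightly sharper intermediate bound, $\norm{A}\le \norm{V}_{L^\infty(\mathbb{T})}$, with no citations, at the price of being tied to the one-dimensional (or more generally $s>m/2$, cf.\ the paper's Remark on higher dimensions) setting where $H^1\subset L^\infty$ --- which is exactly the regime of the lemma, so nothing is lost. Two small points of care: your phrase ``trigonometric polynomial of degree at most $K$'' should be read in the paper's asymmetric convention (frequencies $j=-K,\dots,K-1$); for a symmetric polynomial containing both frequencies $\pm K$, the product $\abs{P}^2$ would contain a frequency-$2K$ term that aliases and the quadrature identity would fail, but all polynomials you apply it to ($\mathcal{I}^K(Vu^K)$ and $u^K$) do lie in the span of $\e^{\iu jx}$, $j=-K,\dots,K-1$, so the step is valid. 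Also, using nodal values $V(x_k)$ requires $V$ to be continuous, which is again supplied by $H^1(\mathbb{T})\hookrightarrow C(\mathbb{T})$ in one space dimension.
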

\begin{proof}
  We show that
  \begin{equation}\label{eq-proof-A-wave}
  \norm{A q} \le C \norm{V}_{H^1(\mathbb{T})} \norm{q}
  \end{equation}
  for all vectors~$q$, which yields the statement of the lemma.
  Note that, by the Parseval equality,
  \[
  \norm{A q} = \normbig{\mathcal{I}^K \klabig{ V u^K }}_{L^2(\mathbb{T})} = \normbig{\mathcal{I}^K \klabig{ \mathcal{I}^K(V) u^K }}_{L^2(\mathbb{T})}
  \]
  with $u^K(x)=\sum_{j=-K}^{K-1} q_j \e^{\iu j x}$. We then use the product estimate $\norm{\mathcal{I}^K(v^K u^K)}_{L^2(\mathbb{T})} \le C \norm{v^K}_{H^1(\mathbb{T})} \norm{u^K}_{L^2(\mathbb{T})}$ for trigonometric polynomials $v^K$ and $u^K$ (see, e.g., Proposition~3.1~(i) of~\cite{Gauckler2015} with $\sigma=0$ and $\sigma'=1$). This yields
  \[
\norm{A q} \le C \normbig{\mathcal{I}^K(V)}_{H^1(\mathbb{T})} \normbig{u^K}_{L^2(\mathbb{T})} = C \normbig{\mathcal{I}^K(V)}_{H^1(\mathbb{T})} \norm{q}.
\]
The estimate~\eqref{eq-proof-A-wave} finally follows from $\norm{\mathcal{I}^K(V)}_{H^1(\mathbb{T})}\le C \norm{V}_{H^1(\mathbb{T})}$ (see, e.g., Lemma~4.2 of~\cite{Hairer2008} with $s=1$).
\end{proof}

We finally get the following result on all-time near-conservation of energy for the linear wave equation (where we write $q_n=(q_{n,j})_{j=-K}^{K-1}$ for the numerical solution of~\eqref{eq-trigo}).

\begin{theorem}[Numerical energy conservation for the linear wave equation]\label{thm-wave}
  Let the parameter~$\rho$ in the linear wave equation~\eqref{eq-wave} satisfy $\rho\ge \tfrac12 c_0^2 c_2 \norm{V}_{H^1(\mathbb{T})} +1$ with the constant $c_2$ of Lemma~\ref{lemma-A-wave}. Consider the spatial semi-discretization~\eqref{eq-wave-semi} of the linear wave equation~\eqref{eq-wave} that is discretized in time with a trigonometric integrator~\eqref{eq-trigo} whose filter functions satisfy~\eqref{eq-filter-bounds} and~\eqref{eq-hl}. Then, we have
\[
\absbig{H(q_n,\dot{q}_n) - H(q_0,\dot{q}_0)} \le C h \qquad\text{for all }\, n\in\mathbb{N}
\]
with $C$ depending only on $c_0$, $c_1$, $c_2$, $\norm{\Omega q_0}=\norm{u^K(\cdot,0)}_{H^1(\mathbb{T})}$, $\norm{\dot{q}_0}=\norm{\partial_t u^K(\cdot,0)}_{L^2(\mathbb{T})}$ and $\norm{V}_{H^1(\mathbb{T})}$. 
\end{theorem}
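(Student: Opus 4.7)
The plan is to deduce the statement directly from Theorem~\ref{thm-nec-strong}, applied to the spatial semi-discretization~\eqref{eq-wave-semi}. The paragraph preceding Lemma~\ref{lemma-A-wave} already identifies~\eqref{eq-wave-semi}, after the stated reindexing, with the abstract form~\eqref{eq-ode}--\eqref{eq-A}, and the energy~\eqref{eq-energy} matches the natural semi-discrete Klein--Gordon energy of $u^K$. So the task reduces to verifying the hypotheses of Theorem~\ref{thm-nec-strong} and translating its output into the claimed form.

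I would then proceed in two steps. First, I would check the hypotheses. The filter conditions~\eqref{eq-filter-bounds} and~\eqref{eq-hl} are assumed. The lower bound on $\rho$ forces $\rho\ge 1$, so every frequency $\omega_j=\sqrt{j^2+\rho}$ is strictly positive; hence~\eqref{eq-omnonzero} holds and $\omega=\min_j\omega_j=\sqrt{\rho}$. The size condition $\omega\ge \tfrac12 c_0^2\norm{A}+1$ then follows by combining this identity with the hypothesis on $\rho$ and Lemma~\ref{lemma-A-wave}'s estimate $\norm{A}\le c_2\norm{V}_{H^1(\mathbb{T})}$.

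Second, I would apply Theorem~\ref{thm-nec-strong} to obtain
\[
\absbig{H(q_n,\dot q_n)-H(q_0,\dot q_0)}\le C\min\klabig{h,\omega^{-1}}+Ch^2.
\]
Since $h\le 1$, the term $Ch^2$ is dominated by $Ch$, and $\min(h,\omega^{-1})\le h$, so the bound reduces to the claimed $Ch$. The remaining bookkeeping converts the dependencies of $C$: the dependence on $\norm{A}$ is absorbed into $c_2$ and $\norm{V}_{H^1(\mathbb{T})}$ via Lemma~\ref{lemma-A-wave}, while the identities $\norm{\Omega q_0}=\norm{u^K(\cdot,0)}_{H^1(\mathbb{T})}$ and $\norm{\dot q_0}=\norm{\partial_t u^K(\cdot,0)}_{L^2(\mathbb{T})}$ are Parseval identities (understanding the Sobolev norm as $\norm{u}_{H^1}^2=\sum_j(\rho+j^2)\abs{\hat u_j}^2$, which is admissible since $\rho>0$).

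I do not anticipate a substantial obstacle; all of the analytic work (the modified energy, its closeness to $H$, the weak regularity lemma, and the spectral bound on $A$) has been carried out in the previous sections, so the proof is essentially a verification of hypotheses together with the straightforward norm identifications indicated above. The only mildly delicate point is confirming that the numerical constants line up in the size condition on $\omega$, which is purely computational.
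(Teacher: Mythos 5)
Your overall route is exactly the paper's: its proof is a one-line invocation of Theorem~\ref{thm-nec-strong} together with Lemma~\ref{lemma-A-wave}, and your bookkeeping (checking~\eqref{eq-omnonzero}, absorbing $\norm{A}$ into $c_2\norm{V}_{H^1(\mathbb{T})}$, the Parseval identifications of the initial-data norms, and bounding $\min(h,\omega^{-1})+h^2$ by a multiple of $h$ using $h\le 1$) matches what the paper does implicitly.

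However, the one step you set aside as ``purely computational'' is precisely where the constants do \emph{not} line up. You correctly identify $\omega=\min_j\sqrt{j^2+\rho}=\sqrt{\rho}$, but then the hypothesis $\rho\ge\tfrac12 c_0^2c_2\norm{V}_{H^1(\mathbb{T})}+1$ combined with $\norm{A}\le c_2\norm{V}_{H^1(\mathbb{T})}$ only yields $\omega^2\ge\tfrac12 c_0^2\norm{A}+1$, whereas Theorem~\ref{thm-nec-strong} requires $\omega\ge\tfrac12 c_0^2\norm{A}+1$. Since the right-hand side exceeds $1$ whenever $A\ne0$, knowing that $\omega^2$ lies above it is strictly weaker than knowing that $\omega$ does (for instance, with $c_0=1$ and $\tfrac12 c_2\norm{V}_{H^1(\mathbb{T})}+1=4$ the hypothesis admits $\rho=4$, i.e.\ $\omega=2$, while the needed threshold can be as large as $4$). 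There is also no slack to exploit: the proof of Lemma~\ref{lemma-reg-weak} needs $\omega^2-1\ge 2\breve{C}+2\widehat{C}=c_0^2\norm{A}+\tfrac14 c_0^4\norm{A}^2$, which is exactly $\omega\ge\tfrac12 c_0^2\norm{A}+1$. To be fair, the paper's own proof hides the same defect: it asserts ``$\omega=\rho\ge1$'', while in fact $\omega^2=\rho$. Both your argument and the paper's become correct if the condition on $\rho$ is read as $\sqrt{\rho}\ge\tfrac12 c_0^2c_2\norm{V}_{H^1(\mathbb{T})}+1$ (equivalently $\rho\ge(\tfrac12 c_0^2c_2\norm{V}_{H^1(\mathbb{T})}+1)^2$); with that repair your proof goes through verbatim, but as written the claimed implication ``the size condition then follows'' is not valid.
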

\begin{proof}
This follows from Theorem~\ref{thm-nec-strong} and Lemma~\ref{lemma-A-wave} since the frequencies $\omega_j=\sqrt{j^2+\rho}$ satisfy~\eqref{eq-omnonzero} and since $\omega=\rho\ge 1$.
\end{proof}

\begin{remark}
  (a) If the condition on~$\rho$ of Theorem~\ref{thm-wave} does not hold, then we can (only) apply Theorem~\ref{thm-nec} which yields near-conservation of energy (only) as long as $\norm{q_n}$ stays bounded.

  (b) Theorem can be extended (with the same proof) to wave equations~\eqref{eq-wave} in higher spatial dimensions~$m$, provided that the potential~$V$ belongs to the Sobolev space $H^s(\mathbb{T}^m)$ for some $s>\tfrac{m}2$.
\end{remark}

Theorem~\ref{thm-wave} states near-conservation of energy on long time intervals (even for all times!) by certain symmetric methods when applied to a Hamiltonian partial differential equation, without requiring any regularity of the numerical solution. While previous results in this direction for linear situations (see \cite{Dujardin2007,Castella2009,Debussche2009,Faou2012}) and nonlinear situations (see \cite{Cohen2008,Gauckler2010b,Faou2009a,Faou2011,Faou2012,Gauckler2017a}, for example) require CFL-type restrictions on the discretization parameters or need to exclude resonant or near-resonant time step-sizes, Theorem~\ref{thm-wave} is completely uniform in both discretization parameters.

\subsection*{Acknowledgement}

This work was supported by Deutsche Forschungsgemeinschaft through SFB 1114.

\end{document}